\newtheorem{theorem}{Theorem}
\newtheorem*{theorem*}{Theorem}
\newtheorem{lemma}{Lemma}[section]
\theoremstyle{definition}
\newtheorem{definition}[lemma]{Definition}
\theoremstyle{remark}
\newtheorem{remark}[lemma]{Remark}
\newcommand{\abs}[1]{\left|#1\right|} 
\newcommand{\cC}{\mathcal{C}} 
\newcommand{\bC}{\mathbb{C}} 
\newcommand{\cH}{\mathcal{H}} 
\newcommand{\cL}{\mathcal{L}} 
\newcommand{\cM}{\mathcal{M}} 
\newcommand{\bN}{\mathbb{N}} 
\newcommand{\norm}[1]{\left\|#1\right\|} 
\newcommand{\cP}{\mathcal{P}} 
\newcommand{\cQ}{\mathcal{Q}} 
\newcommand{\bR}{\mathbb{R}} 
\newcommand{\bS}{\mathbb{S}} 
\newcommand{\bZ}{\mathbb{Z}} 
\newcommand{\Leb}{\operatorname{Leb}} 
\newcommand{\dist}[2]{\operatorname{dist}(#1,#2)} 
\newcommand{\ddist}[3]{\operatorname{dist}_{#1}(#2,#3)} 
\begin{document}
\title{Exponential mixing for singular skew-products}

\author{Oliver Butterley}
\address{(Oliver Butterley) Department of Mathematics, University of Rome Tor Vergata -- Via della Ricerca Scientifica 1 -- 00133 Roma -- Italy}
\email{butterley@mat.uniroma2.it}

\begin{abstract}
  We study skew-products of the form \((x,u) \mapsto (fx, u + \varphi(x))\) where \(f\) is a non-uniformly expanding map on a manifold \(X\) and \(\varphi: X \to \bS^1\) is piecewise \(\cC^1\).
  If the systems satisfies mild assumptions (in particular singular behaviour of \(\varphi\) is permitted) then we prove that the map mixes exponentially with respect to the unique SRB measure.
  This extends previous results by allowing singular behaviour in the fibre map.
\end{abstract}

\maketitle
\thispagestyle{empty}

\section{Introduction}
\label{sec:intro}

A deceptively simple transformation on \([0,1)^2\) is defined as the skew-product \((x,u) \mapsto ([2x], [u + \dist{x}{b}^{a}])\) for some \(a\in(0,1)\), \(b\in [0,1)\).
Although simple to write this example presents an interesting and non-trivial difficulty.
This is the motivating example for this work.
The present work is devoted to exploring the technology which can be used to prove exponential mixing for this and other settings.
We will permit a rather general setting (arbitrary dimension, general classes of maps).
Relatively few concrete examples which have a neutral direction are known to mix exponentially, here we increase this collection.

Skew products with some resemblance to this example have been studied by observing that there is an invariant unstable cone field (in \cite{BE17} this was explicit, in \cite{BV05,AGY06} this idea was still present in the assumptions).
However this isn't satisfied by the above mentioned system.
The singular behaviour which we permit in this present work is a significant problem for the established methods because it is impossible to have an invariant unstable cone field which is uniformly bounded away from the neutral direction. For want of a better expression we call this ``unbounded twist''.
Nevertheless we show that the singular behaviour does not prevent good limit theorems and, in particular, our results prove that the motivating example mixes exponentially.

In this present work we solve the unbounded twist issue by inducing.
We present a general framework and then develop the application to a specific example.
In cases like the above example we will induce even though it seems that the base map doesn't need any inducing.
This ``over-inducing'' suffices to solve the unbounded twist issue.
The key idea is to, if required, view the singularities of the fibre map as artificial singularities of the map in the construction of the induced system. All of this means that we demonstrate that a very large class of partially hyperbolic systems mix exponentially.

Let \(f\) be a transformation on a compact manifold \(X\) and let \(\varphi: X \to \bS^1\) be a function from manifold to the circle \(\bS^1\). We will call \(f\) the \emph{base map} and \(\varphi\) the \emph{fibre map}.
This article is devoted to the study of partially hyperbolic systems defined as the skew-product \(f_\varphi: X \times \bS^1 \to  X \times \bS^1 \),
\[
  f_\varphi : (x,u) \mapsto (fx, u + \varphi(x)).
\]
In order to prove the results we will use the existence of an induced system (Young tower with exponential tails) in the sense that there exists a connected subset \(Y \subset X\) and a piecewise constant inducing time \(R:Y\to\bN\) such that the induced map \(F : x \to f^{R(x)}(x)\) is a full branch Markov map, \(\cC^2\) on partition elements. Defining the induced fibre map as \(\Phi(x) = \sum_{k=0}^{R(x)-1}\varphi(f^k x)\) we consider the induced skew-product \(F_\Phi : Y \times \bS^1 \to  Y \times \bS^1 \),
\[
  F_\Phi : (x,u) \mapsto (Fx, u + \Phi(x)).
\]
The principal aim of this work is to allow weak control on the fibre map, in particular to allow \(D(\varphi \circ f|_\omega^{-1})\) to be unbounded. In one part of this work we will show that if the induced skew-product satisfies certain assumptions then the original system mixes exponentially. In the other part we introduce assumptions (with the emphasis on verifiability) which suffice to show that the induced system satisfies the previously mentioned assumptions.

The problem of exponential mixing for partially hyperbolic maps like these skew-products (similar to flows) is rather difficult because of the neutral direction and the singularities which we permit in the fibre map. Because of the neutral direction, in order to study the rate of mixing and other strong statistical properties, we must use some form of Dolgopyat estimate and observe oscillatory cancellations~\cite{Dolgopyat98}. Recent years have seen significant progress for such results for systems with a neutral direction (e.g., \cite{Pollicott85,Tsujii08,ABV14,BE17,Tsujii18,BW20,CL22}), all using methods based on the work of Dolgopyat to some extent. In particular the results have been extended to the Lorenz flow and systems inspired by it~\cite{BM18,Butterley14,AMV15,AM16,AM17}. Our purpose here is to give a general result for skew-products with (possibly) some degree of singular behaviour and in the process clarify exactly the reach and limits of the notions.
Similar systems to the ones studied here were previously introduced~\cite{MPV13} as a model for the Lorenz flow (in this reference it was claimed that these systems mix exponentially but there was a gap in the proof).

Although the neutral direction causes significant technical difficulty, in our case we fortunately can use (to a large extent) the work of Gouëzel~\cite{Gouezel09} which includes the study of induced skew-products, in particular used to obtain results concerning Farey sequences.
As observed before, a key point here is that we induce even if the base map is already uniformly hyperbolic, in order to obtain the required property for the induced fibre map. In some applications, including the motivating example, a large deviations result for the expanding map (for a function based on distance to singularity) implies that the (artificially) induced system has trajectories which behave well with respect to the singularities of the fibre map. Using this together with the fact that the singularities of the fibre map aren’t worse than distance to some negative power, means that the induced fibre map has the required regularity.

\begin{remark}
  Most likely these results would extend easily to the hyperbolic case when the stable foliation is at least \(\cC^1\) by disintegration along the stable foliation (e.g., using \cite{BM17} in a similar way as was done in \cite{BBM19}).
  However such regularity is arguably not typical in relevant settings~\cite{HW99}.
\end{remark}

\begin{remark}
  We will mostly avoid the details related to possible discontinuities in the system.
  Discontinuities require delicate control, particularly in higher dimension when oscillatory cancellation arguments are required (e.g., \cite{Saussol00,Eslami17,Eslami20,EMV21}).
  However, inducing, as we do here for other motives, can be useful for avoiding the problems related to discontinuities (e.g., \cite{Obayashi09}).
\end{remark}

\begin{remark}
  Suspension semiflows and skew products as we study here share many similarities, in particular the investigation of the rate of mixing requires exactly the same estimates (the ``twisted transfer operators'' are identical). 
  As such, the results here can be relatively easily transferred to the corresponding suspension semiflow setting.
  Alternatively, the extension to general compact group extensions (see e.g., \cite{Dolgopyat02}) is feasible although non-trivial and in this work we choose not to take this road.
\end{remark}

\begin{remark}
  In this present work we assume that the base map is at least \(\cC^2\). Most likely the results can be extended to the \(\cC^{1}\) plus Hölder derivative case by already established ideas (e.g., \cite{AM16, BW20}).
\end{remark}

\begin{remark}
  In this present work our focus is on exponential mixing but there are various other relevant statistical properties, for example, central limit theorem, local limit theorem, almost-sure invariance principal, etc.
  It is to be expected that the main estimates we obtain during this work and which are used for proving the exponential mixing can also be used for proving the other statistical limit laws (see e.g., arguments contained in \cite{AM16,AMV15}).
  (See also \cite{GP10,GRS15,Galatolo18} for some related results.)
\end{remark}

\section{Setting \& results}
\label{sec:results}

Although not part of the motivating example, technology developed for non-uniformly expanding (NUE) systems will be invaluable for tackling the problem. The following four conditions are as introduced by Young~\cite{Young98,Young99} and this structure is often called a \emph{Young tower} structure.

\begin{definition}%
  \label{def:NUE1}
  We say that \(f:X\to X\) is \emph{NUE in the sense of Young} to mean the tuple \((f,X,\mu,Y,R)\) where:
  \begin{itemize}[leftmargin=1.4em]
    \item   \(X\) is a compact Riemannian manifold (possibly with boundary), endowed with a Borel measure \(\mu\) (called the reference measure);
    \item   \(f\) is a nonsingular transformation on \(X\);
    \item   \(Y\) (called the base of the induced system) is a connected open subset of \(X\) with finite measure and finite diameter and there exists a finite or countable partition \({\{Y_\ell\}}_{\ell \in \Lambda}\) of a full measure subset of \(Y\);
    \item   \(R:Y\to\bN\)  (called the return time) is a function constant on each partition element \(Y_{\ell}\);
  \end{itemize}
  Such that the following properties are satisfied:
  \begin{description}[itemindent=-0.3em,leftmargin=1em]
    \item[Y1]
      For each \(\ell \in \Lambda\) let \(R_\ell\) denote the constant value that \(R\) takes on \(Y_{\ell}\).
      For all \(\ell \in \Lambda\), the restriction of \(f^{R_\ell}\) to \(Y_\ell\) is a diffeomorphism between \(Y_\ell\) and \(Y\), satisfying \(\kappa \norm{v} \leq \norm{\smash{Df^{R_\ell}(x)v}} \leq C_\ell \norm{v}\) for any \(x\in Y_\ell\) and for any tangent vector \(v\) at \(x\), for some constants \(\kappa > 1\) (independent of \(\ell\)) and \(C_\ell\). We denote by \(F: Y \to Y\) the map which is equal to \(f^{R_\ell}\) on each set \(Y_\ell\).
    \item[Y2]
      Let \(\cH^{n}_{F}\) denote the set of inverse branches of \(F^n\). Let \(J(x)\) be the inverse of the Jacobian of \(F\) at \(x\) with respect to \(\mu\). We assume that there exists a constant \(C>0\) such that, for any \(\xi\in\cH^{1}_{F}\), \(\norm{D((\log J)\circ \xi)}\leq C\).
    \item[Y3]
      There exists a constant \(C>0\) such that, for any \(\ell\), if \(\xi_\ell: Y \to Y_\ell\) denotes the corresponding inverse branch of \(F\), for any \(k\leq R_\ell\), then \(\norm{\smash{f^k \circ \xi_\ell}}_{\cC^1(Y)}\leq C\).
  \end{description}
\end{definition}

\begin{definition}
  Suppose that  \(f:X\to X\) is NUE in the sense of Young. Then \(f\) is said to have \emph{exponential tails} if there exists \(\sigma_0>0\) such that \(\int_Y e^{\sigma_0 R} \ d\mu < \infty\).
\end{definition}

If \(f:X\to X\) is NUE in the sense of Young with exponential tails then it is known~\cite{Young98} that there exists a probability measure \(\tilde \mu \) on \(X\) which is absolutely continuous with respect to \(\mu\), invariant under \(f\) and ergodic. Moreover, if \(f\) is mixing for \(\tilde \mu\), then it is exponentially mixing (for H\"older continuous observables).

\begin{definition}
  An open subset of a Riemannian manifold \(U\)  is said to have the \emph{weak Federer property} with respect to a finite Borel measure \(\nu\), if, for any \(\gamma>1\), there exists \(D = D(U,\gamma)>1\) and \(\eta_0(\gamma)>0\) such that, for any \(\eta \in (0,\eta_0(\gamma))\),
  \begin{itemize}[leftmargin=1.4em]
    \item There exists a set of points \({\{x_j\}}_{j=1}^{k}\) such that the balls \(B(x_j,\gamma\eta)\) are disjoint and compactly included in \(U\);
    \item  There exists a set of sets \({\{A_j\}}_{j=1}^{k}\) whose union covers a full measure subset of \(U\) and \(A_j \subset B(x_j,\gamma  \eta D)\);
    \item For any \(y_j \in B(x_j, (\gamma-1)\eta)\), we have \(\nu(B(y_j,\eta))\geq D^{-1}\nu(A_j)\).
  \end{itemize}
\end{definition}

\begin{definition}
  A family of open sets \({\{U_n\}}_{n}\) is said to uniformly have the weak Federer property for the measure \(\nu\) if, for all \(\gamma>1\), \(\sup_{n}D(U_{n},\gamma)\) is finite.
\end{definition}

\begin{definition}
  Suppose that \(f : X \to X\) is NUE in the sense of Young. We say that the transformation has the \emph{weak Federer property} if, for each \(h \in \bigcup_{n\in\bN} \cH^{n}_{F}\), the sets \(h(Y)\) uniformly have the weak Federer property with respect to \(\mu_Y\) (the probability measure induced by \(\mu\) on \(Y\)).
\end{definition}

If \({\{U_n\}}_{n}\) is a family of open intervals then the uniform Federer property is trivially satisfied by Lebesgue measure (see \cite[\S6.1]{Gouezel09} for a general criterion for the weak Federer property and see \cite[Remark 2.1]{BV05} for additional comments).
In order to prove exponential mixing uniformity in the Federer assumption is not required~\cite[Remark 2.5]{Gouezel09}.

\begin{definition}
  \label{def:cohomol}
  Let \(Y\) be a set as above with partition \({\{Y_\ell\}}_{\ell\in \Lambda}\).
  A function \(\Phi : Y \to \bS^1\) is said to be \emph{cohomologous to a locally constant function} if there exists a \(\cC^1\) function \(\psi:Y \to \bS^1\) such that \(\Phi - \psi + \psi \circ F\) is constant on each set \(Y_\ell\), \(\ell\in \Lambda\).
\end{definition}

The skew product transformation \(f_\varphi : (x,u) \mapsto (fx, u + \varphi(x))\) is an isometry in the fibres and hence preserves the measure \(\nu = \tilde \mu \times m\) (where \(m\) denotes Lebesgue measure on \(\bS^1\)).

\begin{theorem}
  \label{thm:exp-one}
  Suppose that \(f : X \to X\) is  NUE in the sense of Young with exponential tails and and satisfying the weak Federer property.
  Suppose that \(\varphi : X \to \bR\) is \(\cC^1\) on the interior of \(X\), that the induced function \(\Phi(x) = \sum_{k=0}^{R(x)-1}\varphi(f^k x)\) is not cohomologous to a locally constant function and \(\norm{\smash{D\Phi(x)DF(x)^{-1}}}\) is uniformly bounded for \(x\in Y\).

  Then \(f_\varphi\) mixes exponentially for observables on \(Y \times \bS^1\) in the sense that:
  For any \(\alpha>0\) there exists \(\theta \in (0,1)\), \(C>0\) such that, for all functions  \(g\), \(h\) from \(Y\times \bS^1\) to \(\bC\), bounded and H\"older continuous with exponent \(\alpha\), and for all \(n\in \bN\),
  \[
    \abs{
      \int g \circ f_\varphi^n \cdot h \ d\nu - \left(\int g  \ d\nu\right)\left(\int h  \ d\nu\right)
    }
    \leq C \theta^n \norm{g}_{L^\infty}\norm{h}_{\cC^\alpha}.
  \]
\end{theorem}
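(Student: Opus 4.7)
My plan is to apply Gouëzel's general result for exponential mixing of compact abelian group extensions of non-uniformly expanding maps~\cite{Gouezel09} to the induced skew-product $F_\Phi$, and then unwind the inducing to obtain exponential mixing of $f_\varphi$ itself. The verification reduces to three things: structural properties of the base $F$, cocycle regularity of $\Phi$ along inverse branches, and the non-degeneracy (UNI) condition.

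For the base, assumptions Y1--Y3 together with exponential tails and the weak Federer property supply everything needed of $F$. For the cocycle, the required condition is a uniform Lipschitz bound on $\Phi$ pulled back along inverse branches of $F$; since $D(\Phi \circ \xi_\ell)(y) = D\Phi(\xi_\ell(y)) \cdot DF(\xi_\ell(y))^{-1}$ for the inverse branch $\xi_\ell : Y \to Y_\ell$, the hypothesis that $\norm{D\Phi(x) DF(x)^{-1}}$ is uniformly bounded on $Y$ gives exactly this. The non-cohomology hypothesis on $\Phi$ plays the role of the UNI condition, ruling out the trivial obstruction to oscillatory cancellation.

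The technical heart of the argument, and what I regard as the main obstacle, is a Dolgopyat-type estimate $\norm{\cL_t^n} \leq C \theta^n$ for the twisted transfer operators $\cL_t g = \sum_{\xi \in \cH^1_F} J(\xi(\cdot)) e^{it\Phi(\xi(\cdot))} (g \circ \xi)$. These arise after Fourier-decomposing correlations of $F_\Phi$ in the fibre variable $u \in \bS^1$, so that each mode $t \in \bZ$ becomes an iterate of $\cL_t$ on a Lipschitz/H\"older space over $Y$. The estimate combines Lasota--Yorke inequalities from Y1--Y2, the dyadic covering supplied by the weak Federer property, and oscillatory cancellation between inverse branches which is non-trivial precisely because of non-cohomology of $\Phi$; uniform boundedness of $\norm{D\Phi \cdot DF^{-1}}$ is what makes the phases $\Phi \circ \xi_\ell$ regular enough for the stationary-phase-style cancellation argument to work. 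Summing over modes using Fourier decay of H\"older test functions then yields exponential mixing of $F_\Phi$ on $Y \times \bS^1$.

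It remains to transfer this to $f_\varphi$ itself. For $g, h$ extended by zero off $Y \times \bS^1$, a non-zero term in $\int g \circ f_\varphi^n \cdot h \, d\nu$ forces both $x \in Y$ and $f^n x \in Y$, which happens exactly when $n = S_k R(x) := \sum_{j<k} R(F^j x)$ for some $k$, in which case $f_\varphi^n(x,u) = F_\Phi^k(x,u)$ and $S_n\varphi(x) = S_k \Phi(x)$. Using the standard representation of $\tilde\mu$ as a weighted sum over Young-tower levels one obtains a renewal-type decomposition
\[
  \int g \circ f_\varphi^n \cdot h \, d\nu = c \sum_{k \geq 0} \int g \circ F_\Phi^k \cdot h \cdot 1_{\{S_k R = n\}} \, d(\mu_Y \times m),
\]
and summing this by balancing exponential mixing of $F_\Phi$ on the bulk range $k \asymp n / \int R \, d\mu_Y$ against exponential tail estimates for $S_k R$ on the complement completes the proof. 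This last step is routine once the induced mixing is available; the substantive work lies entirely in the Dolgopyat estimate of the previous paragraph.
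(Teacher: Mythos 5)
Your outline of the first two steps is reasonable, but the transfer step from the induced system to $f_\varphi$ contains a genuine gap, and it is precisely the step that carries the substantive difficulty the paper addresses. First, the identity
\[
  \int g \circ f_\varphi^n \cdot h \, d\nu
  = c \sum_{k \geq 0} \int g \circ F_\Phi^k \cdot h \cdot 1_{\{S_k R = n\}} \, d(\mu_Y \times m)
\]
is not correct as stated: the Young--tower hypotheses (Y1)--(Y3) do not require $R$ to be a \emph{first} return time, so $f^n x \in Y$ does not force $n = S_k R(x)$ for some $k$; moreover the relevant reference measure on $Y$ is the $F$-invariant one (what the paper calls $\tilde\nu_0$), not $\mu_Y$. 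Second, and more importantly, even after restricting to first-return towers, the summand $\int g \circ F_\Phi^k \cdot h \cdot 1_{\{S_k R = n\}}$ cannot be controlled by exponential mixing of $F_\Phi$: the indicator $1_{\{S_k R = n\}}$ is not a H\"older observable, and what is really required is a \emph{joint} Dolgopyat/operator-renewal estimate on the family of operators twisted simultaneously by the return time $n$ and the Fourier mode $k$ in the fibre (the operators $R_{n,k}$, $T_{n,k}$ of Gou\"ezel). Your proposed Dolgopyat estimate is on iterates $\cL_t^n$ of the induced twisted transfer operator, which group branches by the number of inducing steps, whereas the correlation at time $n$ groups them by the original time $n$; these are different objects, and the passage between them is exactly the operator-renewal argument, not a routine balancing of bulk versus tails. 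Claiming this final step is ``routine'' reverses where the work lies.

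By contrast, the paper does not prove exponential mixing for $F_\Phi$ first and then transfer. It builds the Young tower $\widetilde X$, but crucially redefines the tower skew-product so that the fibre rotation acts \emph{only when arriving at the top of the tower} (by $\Phi$), not incrementally by $\varphi$ at each step. With this modification the projection $\pi(x,\ell,u) = (f^\ell x, u + S_\ell\varphi(x))$ intertwines the dynamics, and Gou\"ezel's renewal decomposition $\cM^n_k = C_{n,k} + \sum_{a+i+b=n} A_{a,k}T_{i,k}B_{b,k}$ applies directly on the tower; since the observables are supported on the base level, only the $T_{n,k}$ terms survive, and the Dolgopyat-type estimate on $T_{n,k} = \sum_p \sum_{j_1+\cdots+j_p = n} R_{j_1,k}\cdots R_{j_p,k}$ is taken from the reference because the modified tower dynamics only ever invokes regularity of $\Phi$ (never of $\varphi$). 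So your verification that the hypotheses give Lipschitz control of $\Phi$ along inverse branches and that non-cohomology is the UNI input is on target, but the argument must be run on the $R_{n,k}/T_{n,k}$ family on the tower, not reduced to exponential mixing of $F_\Phi$ followed by a renewal sum.
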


This theorem is essentially the work of Gouëzel~\cite[Theorem 1.7]{Gouezel09} although the work of the reference requires the fibre map \(\varphi\) to be \(\cC^1\) on \(X\) whereas we allow unbounded derivative. However we require the induced fibre map \(\Phi\) to satisfy the same conditions as are actually required during the proof in the reference. On the other hand the result stated here is only for observables supported on the base of the tower \(Y\). Details concerning the modification of the reference in order to prove the theorem are given in Section~\ref{sec:sing-skew}.

\begin{remark}
  The restriction that the observables are supported on \(Y\times \bS^1\) can, to some extent, be mitigated in a standard way by considering observables which map to observables supported on \(Y\times \bS^1\) in finite steps.
  As such, typically exponential mixing results can be extended to observables supported on the complement of the singular set (depending on the exact construction of the inducing scheme).
\end{remark}

The assumptions of the above theorem are overly abstract from our point of view and so we would like to obtain some more verifiable conditions.
(For the origin of the following assumptions see \cite{ABV00,ALP05,Gouezel06}.) We use the following notation: For  $\delta>0$, set $\ddist{\delta}{x}{S}=\dist{x}{S}$ if $\dist{x}{S}<\delta$, and $\ddist{\delta}{x}{S}=1$ otherwise.

\begin{definition}%
  \label{def:NUE2}
  Let \(f\) be a map on a compact Riemannian manifold \(X\) (possibly with boundary).
  We assume that there exists a closed subset $S\subset M$, with zero Lebesgue measure (containing possibly discontinuities or critical points of \(f\) and with $\partial X \subset S$), such that $f$ is a $C^2$ local diffeomorphism on $X\setminus S$.
  We say that \(f\) is \emph{NUE in the sense of a controlled singular set} if the following assumptions are satisfied:
  \begin{enumerate}[label=\textbf{(S\arabic*):},ref=(S\arabic*)]
    \item \label{it:S1}
          (non-degeneracy close to $S$)
          We assume that there exist $B>1$ and $\beta>0$ such that,
          for any $x\in
            M\setminus S$ and every
          $v\in T_x M \setminus \{0\}$,
          \begin{equation*}
            \frac{1}{B}{\dist{x}{S}}^\beta \leq \frac{\norm{Df(x)v}}{\norm{v}}
            \leq B \dist{x}{S}^{-\beta}.
          \end{equation*}
          Assume also that, for all $x,y\in X$ with
          $\dist{x}{y}<\dist{x}{S}/2$,
          \begin{equation*}
            \Bigl|\log \norm{\smash{Df(x)^{-1}}} -\log \norm{\smash{Df(y)^{-1}}} \Bigr|
            \leq B \frac{\dist{x}{y}}{\dist{x}{S}^\beta}
          \end{equation*}
          and
          \begin{equation*}
            \bigl|\log |\det Df(x)^{-1}| - \log |\det Df(y)^{-1}| \bigr|
            \leq B \frac{\dist{x}{y}}{\dist{x}{S}^\beta}.
          \end{equation*}
    \item \label{it:S2}
          (points which are too close to $S$ or haven't yet experienced expansion)
          Let $\delta : (0,\epsilon_0) \to \bR_{+}$,  $\lambda>0$,
          \[
            \begin{aligned}
              \cP_{\epsilon,N} & = \left\{ x \in X :
              \frac{1}{n}\sum_{k=0}^{n-1}- \log
              \ddist{\delta(\epsilon)}{f^k x}{S} > \epsilon, \
              \text{for some \(n \geq N\)}\right\},  \\
              \cQ_{\epsilon,N} & = \left\{ x \in X :
              \frac{1}{n}\sum_{k=0}^{n-1} \log \norm{Df(f^k x)^{-1}}^{-1} < \lambda, \
              \text{for some \(n \geq N\)}\right\}.
            \end{aligned}
          \]
          We assume that there exists \(C>0\) and \(\theta\in(0,1)\) such that, for all $\epsilon \in (0,\epsilon_0)$, the Lebesgue measure of  \(\cP_{\epsilon,N} \cup \cQ_{\epsilon,N}\)
          is not greater than \(C\theta^N\).
  \end{enumerate}
\end{definition}

In order to take advantage of the above assumptions and build Young tower structures, the key concept of \emph{hyperbolic times} is used.
In this present work we further take advantage of this notion to deal with potential problems in the fibre map.
As such, let us now recall the definition of hyperbolic times.
For the purpose of this definition \(f:X\to X\) is a differentiable transformation and \(S\subset X\) is the \emph{singularity set}.
The following definition is exactly as used by Alves, Luzzatto \& Pinheiro~\cite{ALP05}, including the same notation.

\begin{definition}
  \label{def:hyp-time}
  Let \(b>0\), \(\sigma \in (0,1)\), \(\delta >0\).
  We say that \(n\in \bN\) is a \emph{\((b,\sigma,\delta)\)-hyperbolic time}\footnote{In the reference the terminology ``\((\sigma,\delta)\)-hyperbolic time'' is used and dependence on \(b\) is suppressed.} for \(x\) if, for all \(1\leq k \leq n\)
  \[
    \prod_{j=n-k}^{n-1} \norm{Df(f^jx)^{-1}} \leq \sigma^k
    \quad
    \text{and}
    \quad
    \ddist{\delta}{f^{n-k}x}{S} \geq \sigma^{bk}
  \]
  We will denote by \(H_n(b,\sigma,\delta)\) the set of points for which \(n\) is a \((b,\sigma,\delta)\)-hyperbolic time.
\end{definition}

The following is due to Gouëzel (result described in \cite[Proposition 1.16]{Gouezel09} with a proof which uses mostly \cite{Gouezel06}).

\begin{theorem*}[{\!\!\cite[Proposition 1.16]{Gouezel09}}]
  Suppose that \(X\) is a compact Riemannian manifold and that \(f:X\to X\) is NUE in the sense of a controlled singular set (Definition~\ref{def:NUE2}) and let \(b>0\) sufficiently small.
  There exists \(\sigma \in (0,1)\), \(\delta >0\)
  and there exists an open and connected subset \(Y\) of \(X\) such that \(f\) is NUE in the sense of Young (Definition~\ref{def:NUE1}) (on base \(Y\) with respect to Lebesgue measure) with exponential tails and and satisfying the weak Federer property.
  Moreover the return times for the Young tower are \((b,\sigma,\delta)\)-hyperbolic times.
\end{theorem*}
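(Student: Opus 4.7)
The plan is to construct the induced system via hyperbolic times, following the scheme of Alves, Luzzatto \& Pinheiro~\cite{ALP05} together with the quantitative refinements of Gouëzel~\cite{Gouezel06}. First I would fix \(b>0\) small, then select \(\sigma\in(0,1)\) and \(\delta>0\) compatibly with the rates appearing in \ref{it:S2}; by a Pliss-type argument, the exponential smallness of \(\cP_{\epsilon,N}\cup\cQ_{\epsilon,N}\) then yields a positive density of \((b,\sigma,\delta)\)-hyperbolic times for Lebesgue almost every \(x\in X\). The non-degeneracy assumption \ref{it:S1}, combined with the hyperbolic time property, ensures that at such a time \(n\) an inverse branch of \(f^n\) is defined on a ball \(\ball{f^n x}{\delta_0}\) of uniform radius \(\delta_0\), has bounded distortion, and contracts exponentially along the backwards orbit.

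Next I would fix a small open ball \(Y\subset X\setminus S\) of radius \(\rho<\delta_0/4\), centred at a density point for hyperbolic times, and perform a Bowen-type inductive construction. For each \(x\in Y\) and each hyperbolic time \(n\) of \(x\) with \(f^n x\in Y\), consider the connected component of \(f^{-n}(\ball{f^n x}{\delta_0})\) through \(x\). Whenever this component is mapped by \(f^n\) diffeomorphically onto a neighbourhood containing \(Y\) and the orbit has not already returned fully at a smaller time, declare the preimage of \(Y\) inside it a partition element \(Y_\ell\) with \(R_\ell=n\). The expansion at hyperbolic times yields the lower bound in \textbf{Y1}; the upper bound and \textbf{Y2} follow from the distortion estimates in \ref{it:S1}; and \textbf{Y3} is obtained because the hyperbolic time condition bounds \(\ddist{\delta}{f^k \xi_\ell(y)}{S}\) below by \(\sigma^{b(R_\ell-k)}\), which via \ref{it:S1} controls the \(\cC^1\) norm uniformly.

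The main obstacle is establishing genuine exponential tails \(\int_Y e^{\sigma_0 R}\,d\mu<\infty\). I would bound \(\Leb\{R>N\}\) by the Lebesgue measure of points in \(Y\) which have not yet experienced a full-branch return by time \(N\). A careful combinatorial argument, of the kind developed in~\cite{Gouezel06}, reduces this to the union of points in \(\cP_{\epsilon,N}\cup\cQ_{\epsilon,N}\) (small by \ref{it:S2}) and points displaying an atypical deficit of hyperbolic times up to time \(N\) (exponentially unlikely by the Pliss-based lower density). Making this reduction truly exponential, rather than only stretched-exponential, is the delicate point that relies essentially on the exponential bound in \ref{it:S2}.

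Finally, for the weak Federer property I would observe that each set \(h(Y)\), \(h\in \bigcup_n \cH^n_F\), is the image of the fixed ball \(Y\) under an inverse branch with uniformly bounded distortion (a consequence of \textbf{Y2}). Balls in a Riemannian manifold trivially satisfy the Federer property for Lebesgue measure with constants depending only on the dimension, and bounded distortion transports this property to \(h(Y)\) with constants uniform in \(h\) and \(n\). The statement that the \(R_\ell\) are \((b,\sigma,\delta)\)-hyperbolic times is built into the construction by definition.
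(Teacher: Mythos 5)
The paper does not give its own proof of this statement: it is imported verbatim as \cite[Proposition 1.16]{Gouezel09}, whose proof reduces in turn to \cite{Gouezel06}, and the author's only contribution here is the remark that the ``return times are hyperbolic times'' claim, while not visible in the cited statement, can be read off from the argument in \cite[\S 2]{Gouezel06}. So there is no in-paper proof to compare against, and what you have written is a reconstruction of the route taken by those references. As such, the overall skeleton of your sketch --- Pliss-type argument from \ref{it:S2} yielding a positive density of \((b,\sigma,\delta)\)-hyperbolic times, Bowen-type inductive construction of the return partition at such times, distortion control from \ref{it:S1}, exponential tails as the delicate step --- is consistent with the cited argument, and you are correct that the hyperbolic-time property of the return times is built in by construction.

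Where the sketch is genuinely thin is the weak Federer step. You assert that balls trivially satisfy the Federer property and that ``bounded distortion transports this property to \(h(Y)\) with constants uniform in \(h\) and \(n\).'' In dimension one this is fine (images are intervals), but in higher dimension bounded Jacobian distortion in the sense of \textbf{Y2} does not prevent the sets \(h(Y)\), \(h\in\cH^n_F\), from becoming arbitrarily eccentric as \(n\to\infty\): \textbf{Y2} controls ratios of volumes under pullback, but says nothing about the shape of \(\xi^{-1}\!\bigl(\ball{y}{\eta}\bigr)\), which is what the Federer covering argument actually needs. The paper's own definition demands uniformity of the constant \(D(h(Y),\gamma)\) over \emph{all} inverse branches of all \(F^n\), and the mechanism that supplies this in \cite[\S 6.1]{Gouezel09} is the \(\cC^1\) control on the inverse branches coming from the hyperbolic-time contraction estimates combined with the non-degeneracy bound \ref{it:S1}, not distortion of the Jacobian alone. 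That is the one step in your outline where a real idea is missing rather than merely unwritten detail.
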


Note that the final statement isn't highlighted in the statement of the result in the cited reference although it is described in the argument \cite[\S2]{Gouezel06}.
For us this detail is important as already hinted, since we will use these hyperbolic times to control of the singular behaviour of the fibre maps.

\begin{theorem}
  \label{thm:exp-two}
  Let \(f:X\to X\) be \emph{NUE in the sense of a controlled singular set} \(S \subset X\) (Definition~\ref{def:NUE2}).
  Suppose that \(\varphi : X \to \bR\) is \(\cC^1\) on the connected components of \(X \setminus S\) and that there exists \(C>0\), \(s\geq 0\) such that, for all \(x\), \(\norm{D\varphi(x)Df(x)^{-1}} \leq C \dist{x}{S}^{-s}\).
  Further suppose that the induced function \(\Phi(x) = \sum_{k=0}^{R(x)-1}\varphi(f^k x)\) is not cohomologous to a locally constant function.
  Then there exists a subset \(Y\subset X\) such that the assumptions of Theorem~\ref{thm:exp-one} are satisfied.
\end{theorem}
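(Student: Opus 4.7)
The plan is to reduce Theorem~\ref{thm:exp-two} to Theorem~\ref{thm:exp-one} via the cited proposition of Gouëzel. I fix \(b > 0\) small enough that this proposition applies---yielding \(\sigma \in (0,1)\), \(\delta > 0\) and an open connected Young-tower base \(Y\) whose partition return times \(R_\ell\) are \((b,\sigma,\delta)\)-hyperbolic times---and additionally small enough that \(1 - bs > 0\) (automatic when \(s = 0\)). The proposition delivers the NUE-in-the-sense-of-Young structure on \(Y\), exponential tails, and the weak Federer property required by Theorem~\ref{thm:exp-one}. The \(\cC^1\) hypothesis on \(\varphi\) and the non-cohomology hypothesis on \(\Phi\) are inherited directly from Theorem~\ref{thm:exp-two}, so all that remains is to verify the uniform boundedness of \(\norm{D\Phi(x)DF(x)^{-1}}\) on \(Y\).

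On a partition element \(Y_\ell\), write \(r = R_\ell\). Expanding \(\Phi = \sum_{k=0}^{r-1}\varphi \circ f^k\) by the chain rule and telescoping \(Df^k(x) \cdot Df^r(x)^{-1}\) yields
\[
  D\Phi(x)DF(x)^{-1} = \sum_{k=0}^{r-1} D\varphi(f^k x) \, Df(f^k x)^{-1} Df(f^{k+1}x)^{-1}\cdots Df(f^{r-1}x)^{-1}.
\]
The hypothesis on \(\varphi\) bounds the leading pair \(D\varphi(f^k x)Df(f^k x)^{-1}\) by \(C\dist{f^k x}{S}^{-s}\). Since \(r\) is a \((b,\sigma,\delta)\)-hyperbolic time for \(x\), its two defining inequalities (applied with \(k' = r-k-1\) and \(k' = r-k\) respectively) yield \(\prod_{j=k+1}^{r-1}\norm{Df(f^jx)^{-1}} \leq \sigma^{r-k-1}\) and \(\ddist{\delta}{f^k x}{S} \geq \sigma^{b(r-k)}\); a case split on whether \(\dist{f^k x}{S}\) exceeds \(\delta\) upgrades the latter to \(\dist{f^k x}{S}^{-s} \leq C'\sigma^{-bs(r-k)}\).

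Combining, the \(k\)-th summand is at most \(C''\sigma^{(1-bs)(r-k)-1}\); reindexing \(m = r-k\) produces a geometric series,
\[
  \norm{D\Phi(x)DF(x)^{-1}} \leq C''\sigma^{-1}\sum_{m=1}^{r}\sigma^{(1-bs)m} \leq \frac{C''\sigma^{-bs}}{1 - \sigma^{1-bs}},
\]
whose bound is independent of \(\ell\) and of \(r\). This completes the verification. The main (in fact only) obstacle, and the reason for the particular form of the hypothesis \(\norm{D\varphi(x)Df(x)^{-1}} \leq C\dist{x}{S}^{-s}\), is the quantitative trade-off \(bs < 1\) between the polynomial blow-up rate \(s\) of the fibre-map derivative near \(S\) and the hyperbolic-time parameter \(b\) governing how close return orbits may approach \(S\); the freedom to choose \(b\) arbitrarily small in the cited proposition is precisely what makes the argument work for every \(s \geq 0\).
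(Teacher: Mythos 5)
Your proposal is correct and follows essentially the same route as the paper: the paper isolates the summand-by-summand estimate (chain-rule expansion of \(D\Phi\cdot Df^n(x)^{-1}\), bounding each term via the two hyperbolic-time inequalities plus the hypothesis on \(\norm{D\varphi\,Df^{-1}}\), then summing the geometric series under the constraint \(bs<1\)) as Lemma~\ref{lem:twist-control}, and then deduces Theorem~\ref{thm:exp-two} by combining that lemma with Gou\"ezel's Proposition 1.16, exactly as you do. Your reindexing and the handling of the \(\operatorname{dist}\) vs.\ \(\operatorname{dist}_\delta\) discrepancy (via the case split) are equivalent to the paper's, and if anything your index bookkeeping is cleaner than the paper's, which attaches the distance factor to \(f^{\ell+1}x\) where the hypothesis actually produces \(f^{\ell}x\) (a harmless slip affecting only constants).
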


\noindent
The above result is proven in Section~\ref{sec:twist}, using an argument based on hyperbolic times and which controls the regularity of the induced fibre map \(\Phi\).

In the case of our motivating example \((x,u) \mapsto ([2x], [u + {\dist{x}{1}}^{a}])\) we choose the singularity set \(S = \{1\}\) even thought \(1\) isn't a singular point in any sense for the base transformation \(x\mapsto 2x\).
In cases like this one needs to know that the NUE property of the system still holds, even when the singularity set is increase in order to consider also the singularities of the fibre map.
This can be done with minor restrictions and follows from a type of large deviations estimate.
The statement and proof of this in general settings is the content of Section~\ref{sec:artificial} (as done in~\cite{AV12}).

Returning to our motivating example, we have the following result.

\begin{theorem}
  \label{thm:example}
  Let \(X = [0,1]\) and \(f:X\to X\) is defined as \(f: x \mapsto 2x \mod 1\) and let \(\varphi : X \to \bS^1\) be defined as \(\varphi: x \mapsto {\dist{x}{1}}^{a}\) for some \(a\in (0,1)\).
  Then the skew-product
  \(f_\varphi : (x,u) \mapsto (fx, u + \varphi(x))\)
  mixes exponentially for observables supported on the complement of a neighbourhood of \(\{1\} \times \bS^1\).
\end{theorem}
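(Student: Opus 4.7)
I apply Theorem~\ref{thm:exp-two} to this concrete setting and then use the remark after Theorem~\ref{thm:exp-one} to upgrade from observables on \(Y\times\bS^1\) to those supported on the complement of a neighbourhood of \(\{1\}\times\bS^1\). I take \(X=[0,1]\) and choose the singular set \(S=\{0,1/2,1\}\), where \(\{0,1\}=\partial X\) is forced and \(1/2\) is the discontinuity of the doubling map, with \(1\) additionally serving as the artificial singularity corresponding to the blow-up of \(\varphi(x)=(1-x)^a\).

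\textbf{NUE with a controlled singular set.} Condition \ref{it:S1} is immediate: \(Df\equiv 2\) off \(S\), so the non-degeneracy bound holds with any \(\beta>0\) and sufficiently large \(B\), and the two Lipschitz estimates are vacuous since \(\log\norm{\smash{Df^{-1}}}\) and \(\log|\det Df^{-1}|\) are constant. For \ref{it:S2}, the set \(\cQ_{\epsilon,N}\) is empty for any \(\lambda<\log 2\) because \(\log\norm{\smash{Df^{-1}}}^{-1}\equiv\log 2\). For \(\cP_{\epsilon,N}\), the integral \(\int_X-\log\ddist{\delta}{x}{S}\,dx\) tends to \(0\) as \(\delta\to 0\), as the integrand is nonzero only on a set of measure \(O(\delta)\) and is logarithmically integrable near each singular point; choosing \(\delta(\epsilon)\) small so that the mean falls below \(\epsilon/2\), a standard large deviations bound for the uniformly expanding doubling map then gives \(\Leb(\cP_{\epsilon,N})\leq C\theta^N\). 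This is precisely the artificial-singularity framework treated in general in Section~\ref{sec:artificial}.

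\textbf{Derivative bound and the non-cohomology condition.} The function \(\varphi(x)=(1-x)^a\) is \(\cC^1\) on each connected component of \(X\setminus S\), and
\[
  \abs{D\varphi(x)\,Df(x)^{-1}}=\tfrac{a}{2}(1-x)^{a-1}\leq C\dist{x}{S}^{-(1-a)},
\]
verifying the derivative hypothesis of Theorem~\ref{thm:exp-two} with \(s=1-a\in(0,1)\). The main technical obstacle is the non-cohomology condition: showing that the induced \(\Phi\) is not cohomologous on the partition \(\{Y_\ell\}\) of the Young tower to a locally constant function. Suppose for contradiction \(\Phi=c+\psi-\psi\circ F\) with \(\psi\in\cC^1(Y,\bS^1)\) and \(c\) locally constant; differentiating gives \(D\Phi(x)=D\psi(x)-D\psi(Fx)DF(x)\), and iterating along a periodic orbit \(F^n p=p\) yields
\[
  D\psi(p)\Bigl(1-\prod_{j=0}^{n-1}DF(F^j p)\Bigr)=\sum_{k=0}^{n-1}D\Phi(F^k p)\prod_{j=0}^{k-1}DF(F^j p),
\]
pinning down \(D\psi\) at every periodic point of \(F\). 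These values must be mutually consistent, but \(D\Phi\) contains unbounded contributions of the form \((1-f^{R-1}x)^{a-1}\) arising from iterates close to the singularity; comparing the rigid values of \(D\psi\) across well-chosen pairs of periodic orbits whose trajectories agree symbolically except in a single step landing close to \(1\) should force the required contradiction, exploiting that \((1-\cdot)^a\) is not a polynomial. Carrying out this irregularity argument is the example-specific technical work of the proof.

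\textbf{Extension of observables.} With the hypotheses of Theorem~\ref{thm:exp-two} verified, Theorem~\ref{thm:exp-one} yields exponential mixing for H\"older observables supported on \(Y\times\bS^1\), where \(Y\) is an interval bounded away from \(1\). For any open set \(U\subset X\) at positive distance from \(1\), expansion of the doubling map gives \(f^M(U)\supset Y\) for some finite \(M\), so the remark following Theorem~\ref{thm:exp-one} reduces correlations of observables supported on \(U\times\bS^1\) to correlations on \(Y\times\bS^1\) at the cost of finitely many extra iterations, preserving the exponential rate and completing the proof.
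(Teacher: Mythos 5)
Your overall architecture matches the paper's proof: verify that \(f\) is NUE with a controlled singular set including the artificial singularity at \(1\), check the derivative bound on \(\varphi\) (your constant \(a/2\) and exponent \(s=1-a\) are correct), feed this into Theorem~\ref{thm:exp-two}, and then use the remark after Theorem~\ref{thm:exp-one} to extend from observables on \(Y\times\bS^1\) to a neighbourhood of the full phase space minus \(\{1\}\times\bS^1\). The paper makes the tower fully explicit (\(Y=(0,1/2)\), \(Y_\ell=(a_\ell,a_{\ell+1})\) with \(a_\ell=2^{-1}-2^{-\ell}\)) and checks directly in Lemma~\ref{lem:hyp-times} that the first-return times are hyperbolic times, rather than going through the large-deviations route of Section~\ref{sec:artificial}, but those are interchangeable.

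The genuine gap is the non-cohomology condition, which is the example-specific heart of the proof and which you leave as a sketch. The sketch as given would not close, and for a reason the paper itself flags: at the start of Section~\ref{sec:uni} the paper exhibits the fibre map \(\varphi(x)=x^{-a}-(fx)^{-a}\), which has an unbounded derivative yet is \emph{exactly} cohomologous to zero. So ``\(D\Phi\) contains unbounded contributions of the form \((1-f^{R-1}x)^{a-1}\), therefore a contradiction'' is not a usable heuristic; unbounded blowups of this type can and do cancel, and ``\((1-\cdot)^a\) is not a polynomial'' is not by itself an obstruction. Your differentiated Livšic identity merely determines \(D\psi\) at each periodic point of \(F\) separately; it does not produce a consistency conflict between two distinct periodic points without a further mechanism, which you do not supply.

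The paper's Lemma~\ref{lem:not-cohomol} avoids derivatives entirely. It picks three periodic orbits of the induced map: the fixed point \(x=0\in \overline{Y}_1\), the period-one point \(x'=1/3\in \overline{Y}_2\), and the period-two point \(y=1/7\in Y_1\) with \(Fy=2/7\in Y_2\). Telescoping the coboundary along periodic orbits gives the necessary identity \(\Phi(x)+\Phi(x')=\Phi(y)+\Phi(y')\), which in terms of \(\varphi\) becomes the vanishing of
\[
  \chi(a)=1+\bigl(\tfrac{2}{3}\bigr)^a+\bigl(\tfrac{1}{3}\bigr)^a-\bigl(\tfrac{6}{7}\bigr)^a-\bigl(\tfrac{5}{7}\bigr)^a-\bigl(\tfrac{3}{7}\bigr)^a,
\]
and one checks \(\chi(a)<0\) for every \(a\in(0,1)\) (while \(\chi(0)=\chi(1)=0\)). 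This concrete periodic-orbit computation is what actually rules out cohomology; you would need to supply either it or an equally explicit argument to complete the proof.
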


Section~\ref{sec:uni} contains the proof of the above and discussion related to showing the property of the fibre map not being cohomologous to a local constant function in diverse settings.
Since we can for this specific example, we take a very hands-on approach to the argument and explicitly construct the tower and prove the required properties so that the above results can be applied.

\section{Singular skew-products}
\label{sec:sing-skew}

In this section we assume that the induced skew-product map satisfies the assumptions and show that this implies exponential mixing for the original skew-product.
This means that we prove Theorem~\ref{thm:exp-one}.
As made clear by Ruelle~\cite{Ruelle83} it is important to have some condition for the fibre map.
A common way to prove the results that we would like is to obtain a spectral gap for the transfer operator of the system (acting on a suitable Banach space) but the neutral direction complicated this problem (except for the work of Tsujii~\cite{Tsujii08,Tsujii18}).

Let \(f\) be NUE in the sense of Young with exponential tails.  preserving the probability measure \(\mu\).
Assume that \(\mu_Y\) has full support in \(Y\).
Let \(\varphi : X \to \bS^1\) be a \(\cC^1\) function such that the induced fibre map  \(\Phi(x)\) is not cohomologous to a locally constant function.
Let \(\nu = \mu \otimes \Leb \). Since \(f_\varphi\) is an isometry in the fibre the measure \(\nu\) is \(f_\varphi\)-invariant.
We consider the skew-product \(f_\varphi : (x,u) \mapsto (fx, u + \varphi(x))\).

This result is essentially what Gouëzel proved~\cite[\S3]{Gouezel09} however there is something that needs to be observed about their assumption on the fibre map.
Their results are stated for the case when the fibre map is \(\cC^1\) but such a strong condition isn't required in the proof.
Here we demonstrate how their argument suffices for the result which is required in this present context.
As described previously, we started with a skew-product \(f_\varphi: X \times \bS^1 \to  X \times \bS^1 \),
\[
  f_\varphi : (x,u) \mapsto (fx, u + \varphi(x))
\]
and then introduced the induced skew-product \(F_\Phi : Y \times \bS^1 \to  Y \times \bS^1 \),
\[
  F_\Phi : (x,u) \mapsto (Fx, u + \Phi(x))
\]
where \(\Phi(x) = \sum_{k=0}^{R(x)-1}\varphi(f^k x)\) and \(F : x \to f^{R(x)}(x)\) is a full branch Markov map.
Since \(F\) is uniformly expanding and \(\Phi\) satisfies the bounded twist property we have good estimates on the associated transfer operators.
This holds precisely for our setting since it only require the properties of the induced system.

Following Young we will introduce a map on the tower which is a model for \(f\) and then, following Gouëzel~\cite[\S3.1]{Gouezel09}, we introduce a skew-product version of the model.
(See Table~\ref{tab:notation} for a comparison between the notation of the reference and that of the present text.)
\begin{table}[tbp]
  \begin{small}
    \begin{tabular}{l | l l}
      \toprule
                         & Gouëzel~\cite[\S3]{Gouezel09}                                     & Present text                                                                          \\
      \midrule
      NUE map            & \(T: X \to X\)                                                    & \(f: X \to X\)                                                                        \\
      Fibre map          & \(\phi : X \to \bR\)                                              & \(\varphi : X \to \bR\)                                                               \\
      Skew-product       & \(\mathcal{T}:X \times \bS^1 \to  X \times \bS^1 \)               & \(f_\varphi: X \times \bS^1 \to  X \times \bS^1 \)                                    \\
      Base of tower      & \(Y\subset X\)                                                    & \(Y\subset X\)                                                                        \\
      Partition of base  & \(\{W_\ell\}\)                                                    & \(\{Y_\ell\}\)                                                                        \\
      Inducing times     & \(r_\ell\)                                                        & \(R_\ell\)                                                                            \\
      Induced map        & \(T_Y : Y \to Y\)                                                 & \(F : Y  \to  Y \)                                                                    \\
      Induced fibre map  & \(\phi_Y : Y \to \bR\)                                            & \(\Phi : Y \to \bR\)                                                                  \\
      Tower              & \(X^{(n)}\)                                                       & \(\widetilde{X}\)                                                                     \\
      Tower map          & \(U^{(n)}:X^{(n)} \to X^{(n)}\)                                   & \(\tilde{f} :\widetilde{X} \to \widetilde{X} \)                                       \\
      Tower skew-product & \(\mathcal{U}^{(n)}:X^{(n)}\times \bS^1 \to X^{(n)}\times \bS^1\) & \(  \tilde{f}_\varphi:   \widetilde{X}\times \bS^1 \to \widetilde{X} \times \bS^1  \) \\
      \bottomrule
      \addlinespace
    \end{tabular}
  \end{small}
  \caption{Comparison of notation}%
  \label{tab:notation}
\end{table}
Let
\[
  \widetilde{X} = \left\{ (x,\ell): x\in Y, 0\leq \ell < R(x)\right\},
\]
together with the tower map
\[
  \tilde{f}: (x,\ell) \mapsto
  \begin{cases}
    (x, \ell + 1) & \text{if \(\ell+1< R(x)\)}   \\
    (Fx, 0)       & \text{if \(\ell+1 = R(x)\)}.
  \end{cases}
\]
We therefore define the tower skew-product \(\tilde{f}_\varphi:   \widetilde{X}\times \bS^1 \to \widetilde{X} \times \bS^1  \) as
\[
  \tilde{f}_\varphi: (x,\ell,u) \mapsto
  \begin{cases}
    (x, \ell + 1,u)     & \text{if \(\ell+1< R(x)\)}   \\
    (Fx, 0, u +\Phi(x)) & \text{if \(\ell+1 = R(x)\)}.
  \end{cases}
\]
We also write the same definition as
\[
  \tilde{f}_\varphi: (x,\ell,u) \mapsto (\tilde{f}(x,\ell),u+\tilde{\varphi}(x,\ell))
\]
where \(\tilde{\varphi}(x,\ell)\) is equal to \(\Phi(x)\) when \(\ell=R(x)\) and equal to \(0\) otherwise.

Using the tower and the above transfer operator estimates we prove the exponential mixing result.
This requires modification of Gouëzel's argument because of our weaker assumptions on the fibre map \(\varphi\).
In particular we prefer to see the action in the fibre only when we arrive at the top of the tower, not incrementally at each step as is done in the reference.
The (possibly) many-to-one map \(\pi : \widetilde{X} \to X\) is defined as \((x,\ell) \mapsto f^\ell x\).
This has the consequence that \(\pi \circ \tilde{f} = f \circ \pi\).
For convenience, here and subsequently, we use the notation \(S_n\varphi = \sum_{\ell=0}^{n-1}\varphi\circ f^\ell\) and, similarly, \(S_n\tilde{\varphi} = \sum_{\ell=0}^{n-1}\varphi \circ \tilde{f}^\ell\).
Abusing notation since no confusion can arise, let \(\pi : \widetilde{X}\times \bS^1 \to X \times \bS^1\) be defined as \((x,\ell,u) \mapsto (f^\ell x,u + S_\ell \varphi (x))\).
This has the consequence that \(\pi \circ \tilde{f}_{\varphi} = f_{\varphi} \circ \pi\).
For each \(\ell\) we define \(\widetilde{X}_\ell\) to be the subset of \(\widetilde{X}\) such that the second coordinate is equal to \(\ell\). In each case this is a copy of \(\{x \in X : R(x) > \ell\}\).
There is a \(f^R\)-invariant  (\(F\)- invariant) measure \(\tilde{\nu}_0\) on \(\widetilde{X}_0\) (which is a copy of \(Y\)). This then extends to \(\tilde{\nu}\) on \(\widetilde{X}\).
We then define the \(f\)-invariant measure \(\nu\) on \(X\) as \(\nu = \pi_* \tilde{\nu}\).
We denote by \(m\) Lebesgue measure on \(\bS^1\).
The \(f_\varphi\)-invariant measure on \(X \times \bS^1\) is given by \(\nu \times m\).
That it is invariant is a simple consequence of \(f_\varphi\) being an isometry in the second coordinate.

By definition of \(\pi\), \(\tilde{\nu}\) and \(\tilde{f}_\varphi\),
\[
  \abs{ \nu\left( g\circ f_\varphi^n \cdot h  \right) - \nu(g)\nu(h) }
  =
  \abs{ \tilde\nu\left( \tilde{g}\circ \tilde{f}_\varphi^n \cdot \tilde{h}  \right) - \tilde\nu(\tilde{g})\tilde\nu(\tilde{h}) }
\]
where \(\tilde{g} = g \circ \pi\) and similarly for \(h\) (observables on the tower).
Let \(\cL\) denote the transfer operator associated to \(\tilde{f}\).
To take advantage of the possibility of a Fourier decomposition in the neutral direction we write
\[
  g(x,\ell,u) = \sum_{k\in \bZ} \hat{g}_k(x,\ell),
  \quad \text{where} \quad
  \hat{g}_k(x,\ell) = \int g(x,\ell,u) e^{-iku} \ du.
\]
Denote by \(J_n\) the Jacobian associated to \(\tilde{f}\).
The twisted transfer operator is equal to, for any \(h : \widetilde{X} \to \bR\),
\[
  \cM_{k}^{n}h(x)
  =
  \sum_{\tilde{f}^{n}y = x} J_{n}(y) h(y) e^{-ik S_n\tilde{\varphi}(y)}.
\]
In order to study correlation one considers the full transfer operator but then only the diagonal terms remain~\cite[(3.4),(3.5)]{Gouezel09} and so,
\[
  \tilde{\nu}\left(  \tilde{g}\circ \tilde{f}_\varphi^n \cdot \tilde{h}   \right)
  = \sum_{k\in \bZ} \tilde{\nu}\left(  \tilde{g}_{-k} \cdot \cM^n_{k}\tilde{h}_k \right)
\]
Following Gouëzel~\cite[\S3]{Gouezel09} (the operators \(R\), \(T\), \(A\), \(B\), \(C\) are identical with identical notation as the reference) we define the following operators which we later use to reconstruct \(\cM_{n,k}\):
\[
  \begin{aligned}
    R_{n,k}h(x)
     & =
    \sum_{\substack{\tilde{f}^ny = x         \\ y\in Y, \tilde{f}y,\ldots,\tilde{f}^{n-1}y \notin Y, \tilde{f}^{n}y \in Y}}
    J_n(y)h(y) e^{-ikS_n\tilde{\varphi}(y)}, \\
    T_{n,k}h(x)
     & =
    \sum_{\substack{\tilde{f}^ny = x         \\ y\in Y, \tilde{f}^{n}y \in Y}}
    J_n(y)h(y) e^{-ikS_n\tilde{\varphi}(y)}, \\
    A_{n,k}h(x)
     & =
    \sum_{\substack{\tilde{f}^ny = x         \\ y\in Y, \tilde{f}y,\ldots,\tilde{f}^{n}y \notin Y}}
    J_n(y)h(y) e^{-ikS_n\tilde{\varphi}(y)},
  \end{aligned}
\]
\[
  \begin{aligned}
    B_{n,k}h(x)
     & =
    \sum_{\substack{\tilde{f}^ny = x         \\ y,\ldots,\tilde{f}^{n-1}y \notin Y, \tilde{f}^{n}y \in Y}}
    J_n(y)h(y) e^{-ikS_n\tilde{\varphi}(y)}, \\
    C_{n,k}h(x)
     & =
    \sum_{\substack{\tilde{f}^ny = x         \\ y,\ldots,\tilde{f}^{n}y \notin Y}}
    J_n(y)h(y) e^{-ikS_n\tilde{\varphi}(y)}.
  \end{aligned}
\]
In words these are, respectively, the cases where:
(\(R\)) The orbit \(y,\tilde{f}y\ldots,\tilde{f}^{n}y\) starts and ends in \(Y\) but isn't in \(Y\) in the meantime;
(\(T\)) The orbit starts and ends in \(Y\);
(\(A\)) The orbit starts in \(Y\) but doesn't finish in \(Y\);
(\(B\)) The orbit is not in \(Y\) until the last iterate when it is in \(Y\);
(\(C\)) The orbit is never in \(Y\).
Consequently, cutting the orbit at the first and last time it belongs to \(Y\) means that
\[
  \cM_{k}^{n}h(x)
  =
  C_{n,k} + \sum_{a+i+b=n} A_{a,k} T_{i,k}B_{b,k}
\]
and cutting according to each time the orbit belongs to \(Y\) implies that
\[
  T_{n,k} = \sum_{p=1}^{\infty} \sum_{j_1+\cdots +j_p=n}
  R_{j_1,k} \cdots R_{j_p,k}.
\]

Since we will work with observables \(h\) which are supported in \(Y\) we can discard the operators \(B_{n,k}\) and \(C_{n,k}\).
The major part of the argument is the study of the operators \(T_{n,k}h\)~\cite[\S3.3]{Gouezel09} and consequently the result of exponential mixing on the tower~\cite[Theorem 3.6]{Gouezel09}.
The same result holds in the present setting because we defined the dynamics on the tower in such a way that we see the action in the fibre only when we arrive at the top of the tower, not incrementally at each step as is done in the reference.
Moreover the assumption on \(\Phi\) match those required in the reference.
Finally we must understand the argument which deduces exponential mixing for \(f_\varphi: X \times \bS^1 \to  X \times \bS^1 \) from exponential mixing of  \(\tilde{f}_\varphi:   \widetilde{X}\times \bS^1 \to \widetilde{X} \times \bS^1  \).
Consider some \(\cC^r\) observable \(g: Y\times \bS^1 \to \bC\) and the corresponding observable on the tower \(g \circ \pi : \widetilde{X}\times \bS^1 \to \bC\).
Note that \(\pi\) is defined differently here compared to in the reference since it must compensate for the fact that the tower only sees the action in the fibre at the top of the tower (i.e., \((x,\ell,u) \mapsto (f^\ell x,u + S_\ell \varphi (x))\)).
However, since we work with observables supported on \(Y\), the base of the tower, we don't see this discrepancy.
Consequently the argument of the reference, with the modifications of the present setting, proves Theorem~\ref{thm:exp-one}.

\section{Twist control}
\label{sec:twist}

In this section we show that the mild control of singular behaviour of the fibre map suffices to give good control for the twist of iterates at hyperbolic times. This type of argument was previously used by Araújo \& Varandas~\cite[\S4.2.2]{AV12} and the later results on Lorenz flows relied on it~\cite{AMV15,AM17}.
In this section we use it in order to prove Theorem~\ref{thm:exp-two}.

Let \(X\) be a compact Riemannian manifold (possibly with boundary), endowed with a Borel measure \(\mu\) (called the reference measure).
For the purposes of this section we suppose that the base transformation \(f:X \to X\) is a nonsingular transformation and that the fibre map \(\varphi: X \to \bS^1\) is piecewise \(\cC^1\) in the sense of being \(\cC^1\) on the open partition elements.
The object of interest is the skew-product \(f_\varphi: X \times \bS^1 \to  X \times \bS^1 \),
\[
  f_\varphi : (x,u) \mapsto (fx, u + \varphi(x)).
\]
It would be convenient to assume that \(\norm{\smash{D(\varphi \circ {f|}_\omega^{-1})}}\) is uniformly bounded (where \(\omega \subset X\) is any open set such that \(f: \omega \to X\) is invertible) because this would imply the existence of a cone field which is forward invariant under \(f_\varphi\) (in the sense that the cones are mapped within themselves) and uniformly bounded away from the neutral direction (see e.g., \cite{BE17}).
Unfortunately, in cases like the one we wish to consider here, it would be impossible to have such a uniform bound.
The reality is that, at a full measure set of points, any tangent vector will approach arbitrarily close to the neutral direction under the action of the partially hyperbolic dynamics.
There is no reason to believe that this hinders good statistical properties but it causes a difficulty we some of the machinery we would like to use.

We consider the singularity points of the fibre map as artificial singularities of the base map and use the notion of hyperbolic times (Definition~\ref{def:hyp-time}).
At hyperbolic times we know that the orbit hasn't been too often too close to any singularity and this is sufficient for our purposes.

\begin{lemma}
  \label{lem:twist-control}
  Suppose that there exists \(C>0\), \(s\geq 0\) such that, for all \(x\),
  \[
    \norm{D\varphi(x)Df(x)^{-1}} \leq C \dist{x}{S}^{-s}.
  \]
  Suppose that \(b\in (0,s^{-1})\), \(\sigma \in (0,1)\), \(\delta>0\).
  There exists \(C'>0\) such that, whenever \(n\) is a \((b,\sigma,\delta)\)-hyperbolic time for \(x\) then, letting \(\Phi(x) = \sum_{k=0}^{n-1}\varphi(f^k x)\),
  \[
    \norm{D\Phi(x)Df^n(x)^{-1}} \leq C'.
  \]
\end{lemma}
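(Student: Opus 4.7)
The plan is to expand $D\Phi$ via the chain rule, regroup so that each summand contains exactly the quantity controlled by hypothesis, and then exploit both conditions in the hyperbolic-time definition to reduce to a geometric series whose convergence depends critically on the assumption $bs<1$.

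Concretely, I would start from the identity
\[
  D\Phi(x)\,Df^n(x)^{-1} = \sum_{k=0}^{n-1} D\varphi(f^k x)\,[Df^{n-k}(f^k x)]^{-1},
\]
which follows from the chain rule applied to $\Phi = \sum_{k=0}^{n-1} \varphi \circ f^k$ together with the cocycle identity $Df^k(x)Df^n(x)^{-1} = [Df^{n-k}(f^k x)]^{-1}$. I would then split $[Df^{n-k}(f^k x)]^{-1}$ as the product of its first factor $Df(f^k x)^{-1}$ and the tail $\prod_{j=k+1}^{n-1} Df(f^j x)^{-1}$. Pairing $D\varphi(f^k x)$ with $Df(f^k x)^{-1}$ immediately invokes the hypothesis, giving $\norm{D\varphi(f^k x)Df(f^k x)^{-1}} \leq C\dist{f^k x}{S}^{-s}$. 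The tail is controlled by the contraction part of the hyperbolic-time definition applied with exponent $n-k-1$, yielding $\prod_{j=k+1}^{n-1} \norm{Df(f^j x)^{-1}} \leq \sigma^{n-k-1}$.

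Next I would use the distance-to-singularity part of the hyperbolic-time definition. Writing $m=n-k$, the bound $\ddist{\delta}{f^k x}{S} \geq \sigma^{bm}$ forces $\dist{f^k x}{S} \geq \min(\delta, \sigma^{bm})$, and consequently $\dist{f^k x}{S}^{-s} \leq \delta^{-s} + \sigma^{-bsm}$. Assembling the estimates, the norm in question is bounded by a constant times
\[
  \sum_{m=1}^{n}\bigl(\delta^{-s} + \sigma^{-bsm}\bigr)\sigma^{m-1},
\]
which splits into two geometric sums: the first has ratio $\sigma<1$ and is trivially finite, and the second has ratio $\sigma^{1-bs}$.

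The main obstacle, and the reason the hypothesis $b<s^{-1}$ is sharp, is the balancing act inside each summand: the singularity factor can blow up at rate $\sigma^{-bs(n-k)}$, but is multiplied by the hyperbolic contraction $\sigma^{n-k-1}$. Summability of the resulting geometric series with ratio $\sigma^{1-bs}$ requires precisely $1-bs>0$, i.e.\ $b<s^{-1}$. Once this is in hand, both bounds are uniform in $n$ and the point $x$, so a constant $C'$ depending only on $C, \sigma, b, s, \delta$ emerges, as required.
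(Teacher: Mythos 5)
Your proposal is correct and follows essentially the same route as the paper's own proof: apply the chain rule to decompose $D\Phi(x)Df^n(x)^{-1}$ into a sum of terms $D\varphi(f^k x)Df(f^k x)^{-1}\cdot Df^{n-k-1}(f^{k+1}x)^{-1}$, control the first factor by the hypothesis and the tail by the contraction and slow-recurrence conditions of the hyperbolic time, and observe that $bs<1$ makes the resulting geometric series converge with ratio $\sigma^{1-bs}$. If anything, you are slightly more careful than the paper in one small bookkeeping step: the paper's displayed bound writes $\dist{f^{\ell+1}x}{S}^{-s}$ where, after applying the hypothesis to $D\varphi(f^\ell x)Df(f^\ell x)^{-1}$, the factor should be $\dist{f^{\ell}x}{S}^{-s}$; you use the corresponding hyperbolic-time inequality at index $m=n-k$ (rather than $n-k-1$), which is the consistent pairing and changes nothing in the final geometric bound. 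Your explicit splitting of $\dist{f^k x}{S}^{-s}\le\delta^{-s}+\sigma^{-bsm}$ is also a cleaner way to pass from the truncated distance $\ddist{\delta}{\cdot}{\cdot}$ to the genuine distance than the paper's remark that $\dist{\cdot}{\cdot}\ge C\ddist{\delta}{\cdot}{\cdot}$, though both are valid.
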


\begin{proof}
  We observe that, since \(\Phi(x) = \sum_{\ell=0}^{n-1}\varphi(f^\ell x)\),
  \[
    D\Phi(x)Df^n(x)^{-1} = \sum_{\ell=0}^{n-1} D\varphi(f^\ell x)Df(f^\ell x)^{-1} Df^{n-\ell-1}(f^{\ell+1}x)^{-1}.
  \]
  We can use the assumption of the theorem which controls the quantity \( D\varphi(f^\ell x)Df(f^\ell x)^{-1}\).
  Consequently
  \[
    \norm{D\Phi(x)Df^n(x)^{-1}}\leq C \sum_{\ell=0}^{n-1}  \dist{f^{\ell+1} x}{S}^{-s}  \norm{\smash{Df^{n-\ell-1}(f^{\ell+1}x)^{-1}}}.
  \]
  Observe that \(\norm{\smash{Df^{n-\ell-1}(f^{\ell+1} x)^{-1}}} \leq \prod_{j=\ell+1}^{n-1} \norm{Df(f^jx)^{-1}}\).
  Since, by assumption,  \(n\) is a \((b,\sigma,\delta)\)-hyperbolic time for \(x\), this quantity is bounded from above as \(\prod_{j=\ell+1}^{n-1} \norm{\smash{Df(f^jx)^{-1}}} \leq \sigma^{n-\ell-1}\).
  Additionally \(\ddist{\delta}{f^{\ell+1} x}{S} \geq \sigma^{b(n-\ell-1)}\).
  We observe that \(\dist{\cdot}{\cdot} \geq C \ddist{\delta}{\cdot}{\cdot}\) for some \(C>0\) and immediately absorb this quantity into the previous \(C\).
  Since, by assumption, \(bs < 1\),
  \[
    \begin{aligned}
      \norm{D\Phi(x)Df^n(x)^{-1}}
       & \leq C \sum_{\ell=0}^{n-1}  \sigma^{-b(n-\ell-1)s}  \sigma^{n-\ell-1} \\
       & = C \sum_{k=0}^{n-1}  \sigma^{(1-bs)k}
      \leq \frac{C}{1-\sigma^{1-bs}}.
    \end{aligned}
  \]
  This estimate is independent of \(x\) and \(n\), as required by the statement of the lemma.
\end{proof}

\begin{proof}[Proof of Theorem~\ref{thm:exp-two}]
  The combination of Lemma~\ref{lem:twist-control} and Theorem~\cite[Proposition 1.16]{Gouezel09} is the claimed result.
\end{proof}

\begin{remark}
  As per the following example, \(\norm{D\varphi(x)Df(x)^{-1}}\) might be unbounded even when \(\varphi\) is a bounded function.
  Let \(X = \bR / \bZ\) and let \(f: X \to X\) be defined as \(x \mapsto 2x\).
  The fibre map  \(\varphi : X \to \bR\) is defined for the fundamental domain, \(x\in [0,1)\),
  \[
    \varphi(x) =
    \begin{cases}
      2 - 2x - (1-2x)^{\frac{1}{2}} & \text{if $x<\tfrac{1}{2}$}      \\
      2 - 2x + (2x-1)^{\frac{1}{2}} & \text{if $x\geq \tfrac{1}{2}$}.
    \end{cases}
  \]
  Observe that \(\varphi\) is smooth and continuous.
  However \(\norm{D\varphi(x)Df(x)^{-1}}\) is unbounded at \(x=\frac{1}{2}\).
\end{remark}

\section{Enlarging the singularity set}
\label{sec:artificial}

In this section we describe the relevant argument to use if we have a uniformly expanding transformation and then we need to enlarge the singularity set because of the singularities of the fibre map.
We can then use a large deviations argument to show that the system is NUE in the sense of a controlled singular set (Definition~\ref{def:NUE2}) even with the enlarged singular set.

\begin{lemma}
  \label{lem:singular}
  Suppose that \(f:X\to X\) is \emph{NUE in the sense of a controlled singular set} \(S \subset X\) (Definition~\ref{def:NUE2}).
  Further suppose that \(S' \subset X \setminus S\) is a finite union of \(\cC^1\) manifolds with boundary of dimension strictly less that the dimension of \(X\).
  Let \(S'' = S \cup S' \subset X\).

  Then \(f:X\to X\) is \emph{NUE in the sense of a controlled singular set}, with respect to the set \(S'' \subset X\).
\end{lemma}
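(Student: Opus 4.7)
The plan is to check conditions (S1) and (S2) of Definition~\ref{def:NUE2} for \(f\) with respect to \(S''\), exploiting the set inclusion \(S \subset S''\). Condition (S1) will fall out essentially for free: from \(\dist{x}{S''}\leq\dist{x}{S}\) we get \(\dist{x}{S''}^{\beta}\leq\dist{x}{S}^{\beta}\) and \(\dist{x}{S''}^{-\beta}\geq\dist{x}{S}^{-\beta}\), so the three derivative bounds in the hypothesis immediately imply their analogues for \(S''\) with the same constant \(B\). Both log-Lipschitz inequalities hold on the (possibly smaller) domain \(\{y:\dist{x}{y}<\dist{x}{S''}/2\}\subset\{y:\dist{x}{y}<\dist{x}{S}/2\}\) with a right-hand side that only grows when \(S\) is replaced by \(S''\). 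Finally, \(f\) remains a \(\cC^{2}\) local diffeomorphism on \(X\setminus S''\subset X\setminus S\).

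The set \(\cQ_{\epsilon,N}\) in (S2) does not involve the singular set, so it is unchanged. The real work is to control \(\cP_{\epsilon,N}\) when it is defined with respect to \(S''\); denote this enlarged set by \(\cP_{\epsilon,N}^{S''}\). The key observation is the pointwise inequality
\[
  -\log\ddist{\delta}{x}{S''} \leq -\log\ddist{\delta}{x}{S} + \bigl(-\log\ddist{\delta}{x}{S'}\bigr),
\]
which follows from \(\dist{x}{S''}=\min(\dist{x}{S},\dist{x}{S'})\). Averaging along the orbit, if the left-hand Birkhoff mean exceeds \(\epsilon\), then one of the two right-hand Birkhoff means exceeds \(\epsilon/2\). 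Hence \(\cP_{\epsilon,N}^{S''}\subset\cP_{\epsilon/2,N}^{S}\cup\cP_{\epsilon/2,N}^{S'}\), where \(\cP_{\epsilon/2,N}^{S'}\) is the analogous set with \(S'\) in place of \(S\). The first piece is handled directly by the hypothesis applied with threshold \(\epsilon/2\).

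It remains to obtain an exponential bound for \(\Leb(\cP_{\epsilon/2,N}^{S'})\). Because \(S'\) is a finite union of \(\cC^{1}\) submanifolds of codimension at least one, a tubular neighbourhood estimate gives \(\Leb\{x:\dist{x}{S'}<r\}=O(r)\) as \(r\to 0\). Consequently \(\psi:=-\log\ddist{\delta}{\cdot}{S'}\) lies in \(L^{p}(\Leb)\) for every \(p<\infty\), has an exponential moment \(\int e^{c\psi}\,d\Leb<\infty\) for all sufficiently small \(c>0\), and \(\int\psi\,d\Leb=O(\delta|\log\delta|)\to 0\) as \(\delta\to 0\). I would then import the large deviation machinery from Araújo--Varandas~\cite[\S4]{AV12}: the Young tower structure of \(f\) with exponential return tails (from the hypothesis that \(f\) is NUE with respect to \(S\)), combined with the exponential integrability of \(\psi\), produces an exponential large deviation bound for the Birkhoff averages of \(\psi\), provided its Lebesgue expectation is below \(\epsilon/2\). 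Choosing \(\delta(\epsilon)\) sufficiently small arranges this.

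The main obstacle is this last step, namely an exponential large deviation estimate for an \emph{unbounded} observable; the standard Young tower large deviation results require bounded (Hölder) observables. My plan to get around this is a truncation: write \(\psi=\psi_{M}+(\psi-\psi_{M})\) with \(\psi_{M}=\min(\psi,M)\). For the bounded part the standard large deviation bound on the tower applies directly, while for the tail, the Lebesgue measure of \(\{\psi>M\}\) is exponentially small in \(M\) and the contribution of the tail to the Birkhoff sum is controlled by a Borel--Cantelli argument which uses both the exponential integrability of \(\psi\) and the exponential tails of the return time to \(Y\). Balancing \(M\) against \(N\) yields the required exponential bound on \(\Leb(\cP_{\epsilon/2,N}^{S'})\), which together with the original bound on \(\cP_{\epsilon/2,N}^{S}\) establishes (S2) for \(S''\) with a possibly smaller \(\theta\).
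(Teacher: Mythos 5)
Your proposal follows essentially the same route as the paper: check that (S1) carries over (you justify this via the pointwise inequality $\dist{x}{S''}\leq\dist{x}{S}$, which is cleaner than the paper's terser invocation of disjointness but has the same content, and you correctly note that $f$ remains a $\cC^2$ local diffeomorphism on the smaller set $X\setminus S''$); observe that the $\cQ$-part of (S2) is untouched; and reduce the $\cP$-part for $S''$ to a large-deviation estimate for the unbounded nonnegative observable $x\mapsto-\log\ddist{\delta}{x}{S'}$, whose Lebesgue expectation tends to zero as $\delta\to 0$ because $S'$ has codimension at least one. Your explicit inclusion $\cP^{S''}_{\epsilon,N}\subset\cP^{S}_{\epsilon/2,N}\cup\cP^{S'}_{\epsilon/2,N}$ makes precise what the paper leaves implicit, and since (S2) holds for all $\epsilon\in(0,\epsilon_0)$ with uniform $C,\theta$, the $S$-piece is handled by the hypothesis.

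The divergence is at the final step. The paper treats the bound on $\Leb(\cP^{S'}_{\epsilon/2,N})$ as a direct application of~\cite[Theorem E]{AFLV11}: that result is a large-deviations estimate formulated precisely for slow-recurrence observables of this type, with the exponential integrability you derive from the tubular-neighbourhood estimate serving as the hypothesis. So the ``main obstacle'' you identify is already discharged by the cited theorem and the truncation detour is unnecessary. Your alternative self-contained route via $\psi=\psi_M+(\psi-\psi_M)$ is a reasonable instinct, but as written the second half does not close: a Borel--Cantelli argument gives almost-sure asymptotic control, not the quantitative exponential decay of $\Leb(\cP^{S'}_{\epsilon/2,N})$ in $N$ that (S2) demands. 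To make that piece rigorous you would need a Chernoff-type exponential-Markov bound on the Birkhoff sums of the tail $(\psi-\psi_M)_+$, balancing $M$ against $N$, and at that point you would essentially be re-deriving the relevant case of~\cite[Theorem E]{AFLV11}. In short: same strategy and correct decomposition, but the paper's choice of reference makes the final large-deviation step immediate.
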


\begin{proof}
  Observe that \(S \cap S' = \emptyset\).
  Consequently all the inequalities of assumption \ref{it:S1} remain satisfied.
  Also, the second part of assumption \ref{it:S2} (relating to \(\cQ_{\epsilon,N}\)) remains satisfied since it doesn't depend on the singular set.
  It remains to consider the property which is sometimes described as \emph{slow recurrence to the singular/critical set}.
  We must show that the set
  \[
    \left\{ x \in X :
    \frac{1}{n}\sum_{k=0}^{n-1}- \log
    \ddist{\delta(\epsilon)}{f^k x}{S'} > \epsilon, \
    \text{for some \(n \geq N\)}\right\}
  \]
  is small in Lebesgue measure.
  We can see this estimate as a question of \emph{large deviations} where our ``observable'' is \(x \mapsto - \log \ddist{\delta(\epsilon)}{x}{S'}\).
  Consequently the desired estimate follows from the relevant large deviations result~\cite[Theorem E]{AFLV11}.
\end{proof}

\section{Uniform non-integrability}
\label{sec:uni}

In this section we show how information about the original map can be used to show that the induced fibre map is not a coboundary with respect to the induced base map.
In the terminology used in several of the key references, we show that the \emph{uniform non-integrability} (UNI) condition holds.

As far as this author is aware, there are just two different ways that the fibre map is not cohomologous to a locally constant function.
One approach is to check using periodic orbits and obtain a contradiction (e.g., \cite[Remark 1.15 / Lemma 6.5 / Lemma A.8 / 1st paragraph of \S1.4]{Gouezel09}).
Such arguments are also convenient for establishing that the fibre map not being cohomologous to a locally constant function can be obtained by arbitrary small perturbations (e.g., \cite{AV12,ABV14,BW20}).
And alternative approach is to take advantage of the unbounded nature of the fibre map (or its derivative) in order to obtain a contradiction and hence prove that the fibre map is not cohomologous to a locally constant function.
(See e.g., \cite[\S 4.2.3 \& erratum]{AV12}, \cite[Prop 3.4]{AMV15} and \cite[Lem 4.2, Cor 4.3]{AM16}, often using some type of Livšic type argument~\cite{BHN05}.)
In this section we take this point of view and try to exploit the unbounded nature of the fibre map in order to prove the required property.

Firstly we introduce an example to remind ourselves that we need to take care in this argument.
Let \(X = \bR / \bZ\) and let \(f: X \to X\) be defined as \(x \mapsto 2x\).
Fix some \(a>0\) and define, for \(x\in [0,1]\), the fibre map  \(\varphi(x) = (x)^{-a} - (f x)^{-a}\).
Observe that \(\varphi(\epsilon) \to +\infty\) as \(\epsilon\to 0\) and \(\varphi( \frac{1}{2} + \epsilon) \to -\infty\) as \(\epsilon\to 0\).
Consequently \(\varphi\) is unbounded yet, by definition, is cohomologous to a constant.
The skew-product, although rather disguised, is simply the identity in the fibre.

\begin{remark}
  We can also construct an example which is inspired by the Lorenz flow.
  Let \(f: [-1,1] \to [-1,1]\) be a ``Lorenz-like'' map \cite{LMP05}.
  In particular unbounded derivative at \(x=0\).
  Furthermore, for \(x\in [-1,1]\), let \(\varphi(x) = \log\abs{fx} - \log\abs{x}\).
  The functions goes to \(+\infty\) at \(x=0\), like seen in the Lorenz case, but, differently to the Lorenz case, goes to \(-\infty\) at the two preimages, \(f^{-1}(0)\).
  (The proof of \cite[Theorem 3.4]{AMV15} considers \(f(0^{+})\), \(f^2(0^{+})\) and so would not see the difference with the present example and so it is subtle where such is ruled out in that work where not being cohomologous to a locally constant function is proved. However there they take advantage of the possibility of a Young tower where the base is an open interval containing the singularity \cite[Theorem 4.3]{AV12}.)
\end{remark}

For the remainder of this section we consider the setting assumed in Theorem~\ref{thm:example}.
In particular, \(X = [0,1]\) and \(f:X\to X\) is defined as \(f: x \mapsto 2x \mod 1\).
Furthermore \(\varphi(x) = {\dist{x}{1}}^{a}\) for some \(a\in (0,1)\).
We will take advantage of the fact that in this setting we are able to define an explicit inducing scheme.
Let \(Y=(0,\frac{1}{2})\subset X\), and, for all \(\ell\in\bN\), let \(a_\ell = 2^{-1} - 2^{-\ell}\), \(Y_\ell = (a_{\ell},a_{\ell+1})\).
By definition, \(\{Y_\ell\}_{\ell\in\bN}\) is a partition of a full measure subset of \(Y\).
Moreover \(f^j Y_\ell \cap Y = \emptyset\) whenever \(1\leq j \leq \ell-1\) and  \(f^\ell : Y_\ell \to (0,1)\) is a bijection.
In words, \(\ell\) is the first return time to \(Y\) for each \(x\in Y_\ell\).
Following the notation earlier in this work, \(R(x)\) is defined to be \(\ell\) for each \(x\in Y_\ell\) and \(F : Y \to Y\) is defined as \(F: x\mapsto f^{R(x)} x\).
Let \(S = \{1\} \subset X\).
Although this point is not a singularity in any sense for \(f\) it is a singularity for the fibre map \(\varphi\).

\begin{lemma}
  \label{lem:hyp-times}
  For all \(b>1\) there exists \(\delta>0\) such that, for all \(\ell\in \bN\), \(\ell\) is a \((b,\frac{1}{2},\delta)\)-hyperbolic time for \(x\in Y_\ell\) (with respect to the map \(f : X\to X\) and the singularity set \(S = \{1\} \subset X\)).
\end{lemma}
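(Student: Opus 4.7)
The plan is to verify the two clauses of Definition~\ref{def:hyp-time} directly, with \(\sigma=\tfrac12\) and \(n=\ell\), using the explicit geometry of the partition \(\{Y_\ell\}\). For the expansion clause, observe that \(|Df(y)|=2\) at every \(y\in[0,1]\setminus\{\tfrac12\}\), so \(\norm{\smash{Df(y)^{-1}}}=\tfrac12\) wherever the derivative is defined. For \(x\in Y_\ell\subset(0,\tfrac12)\) an easy induction shows
\[
  f^j(Y_\ell) = (1-2^{j-\ell},\,1-2^{j-\ell-1}) \subset (\tfrac12,1) \quad \text{for } 1\leq j\leq \ell-1,
\]
so the orbit \(x,fx,\ldots,f^{\ell-1}x\) avoids the discontinuity at \(\tfrac12\); consequently \(\prod_{j=\ell-k}^{\ell-1}\norm{\smash{Df(f^j x)^{-1}}}=2^{-k}\) for every \(1\leq k\leq \ell\), and the first clause holds with equality.

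The explicit description above also drives the second clause. Setting \(j=\ell-k\) gives \(\dist{f^{\ell-k}x}{S}=1-f^{\ell-k}x>2^{-k-1}\) for \(1\leq k\leq \ell-1\), and \(\dist{x}{S}>\tfrac12\) for \(k=\ell\). Given \(b>1\), I set \(K=\lceil 1/(b-1)\rceil\) and \(\delta=2^{-K}\leq\tfrac12\), and check \(\ddist{\delta}{f^{\ell-k}x}{S}\geq 2^{-bk}\) in three ranges. If \(1\leq k\leq K-1\) then \(\dist{f^{\ell-k}x}{S}>2^{-k-1}\geq 2^{-K}=\delta\), so the truncated distance equals \(1\) and the inequality is immediate. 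If \(K\leq k\leq \ell-1\) then the choice of \(K\) forces \(k(b-1)\geq 1\), hence \(bk\geq k+1\) and \(2^{-bk}\leq 2^{-k-1}<\dist{f^{\ell-k}x}{S}\), so the inequality holds whether or not the distance exceeds \(\delta\). Finally, if \(k=\ell\) then \(\dist{x}{S}>\tfrac12\geq\delta\), giving \(\ddist{\delta}{x}{S}=1\geq 2^{-b\ell}\).

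There is no real obstacle: the argument is elementary bookkeeping, relying on the geometric fact that the orbit through \(Y_\ell\) stays a distance \(>2^{-k-1}\) from \(S=\{1\}\) at step \(\ell-k\), matched against the elementary equivalence \(k(b-1)\geq 1\Leftrightarrow bk\geq k+1\). The only slightly subtle point is the handling of the finitely many small \(k\) for which the sharp lower bound \(2^{-k-1}\) is weaker than \(2^{-bk}\); these cases are absorbed by choosing \(\delta\) small enough so that \(\ddist{\delta}{\cdot}{\cdot}\) is constantly \(1\) in that regime, which is precisely what fixing \(K=\lceil 1/(b-1)\rceil\) achieves.
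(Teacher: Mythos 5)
Your proof is correct and follows essentially the same route as the paper: the expansion clause is immediate from $|Df|\equiv 2$, and for the recurrence clause the key computation is that $f^{\ell-k}$ maps $Y_\ell$ into $(1-2^{-k},1-2^{-k-1})$ so that $\dist{f^{\ell-k}x}{1}> 2^{-k-1}$, matched against $2^{-bk}$ by choosing $\delta$ small enough that the finitely many small-$k$ exceptions are absorbed by the truncation in $\ddist{\delta}{\cdot}{\cdot}$. The only difference is cosmetic: you make the threshold $K=\lceil 1/(b-1)\rceil$ and the case split explicit, whereas the paper leaves the ``$k$ sufficiently large'' choice implicit.
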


\begin{proof}
  Let \(x\in Y_\ell\).
  For the first property of hyperbolic times, we observe that \(\prod_{j=\ell-k}^{\ell-1} \norm{Df(f^jx)^{-1}} = 2^{-k}\), consistent with the choice of \(\sigma = \frac{1}{2}\).
  For the other property we must consider how close orbits can approach the singularity set.
  It remains to show that \(\ddist{\delta}{f^{\ell-k}x}{1} \geq \sigma^{bk}\).
  It suffices to consider \(k < \ell\) since \(k=\ell\) implies that \(\dist{f^{\ell-k}x}{1} =\dist{x}{1} \geq \frac{1}{2}\).
  We calculate that,
  \[
    \begin{aligned}
      \dist{f^{\ell-k}x}{1} & \geq \dist{f^{\ell-k}a_{\ell+1}}{1} = 1 - f^{\ell-k}(2^{-1} - 2^{-(\ell+1)}) \\
                            & = 1 - (1 - 2^{-(k+1)}) = \tfrac{1}{2}2^{-k}.
    \end{aligned}
  \]
  Since we assumed that \(b>1\) by choosing \(\delta>0\) we obtain, uniformly, the required estimate (i.e., \(\tfrac{1}{2}2^{-k} \geq 2^{-bk}\) whenever \(k\) is sufficiently large that \(\tfrac{1}{2}2^{-k}  < \delta\)).
\end{proof}

\begin{lemma}
  \label{lem:not-cohomol}
  The induced fibre map \(\Phi : Y \to \bR\) is \emph{not} cohomologous to a function constant on each \(Y_\ell\).
\end{lemma}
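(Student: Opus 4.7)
The plan is to argue by contradiction: assume real constants $c_\ell$ and a $\cC^1$ map $\psi : Y \to \bS^1$ satisfy $\Phi(x) - \psi(x) + \psi(F(x)) = c_\ell$ on each $Y_\ell$, and extract two independent descriptions of the sequence $\{c_\ell\}$ that cannot be reconciled when $\varphi(x) = (1-x)^a$ with $a \in (0,1)$. The first description will come from evaluating the cohomological equation at a fixed point of $F|_{Y_\ell}$; the second from continuity of $\psi$ across the Markov boundary $a_{\ell+1}$.

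For the first description, I note that $F|_{Y_\ell}$ is the affine map $x \mapsto 2^\ell x - (2^{\ell-1} - 1)$, so for $\ell \geq 2$ it has a unique fixed point $x_\ell^* = (2^{\ell-1} - 1)/(2^\ell - 1)$ lying in $Y_\ell$. Since $F(x_\ell^*) = x_\ell^*$, the cohomological equation forces $c_\ell = \Phi(x_\ell^*)$. Using the explicit orbit $1 - f^j(x_\ell^*) = 2^{j-1}/(2^\ell - 1)$ for $j = 1, \ldots, \ell-1$ together with $1 - x_\ell^* \to 1/2$, the quantity $\Phi(x_\ell^*)$ becomes a geometric sum that I can evaluate in closed form, and in particular $c_\ell \to 1/(2^a - 1)$ as $\ell \to \infty$.

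For the second, I will rearrange the cohomological equation as $\psi \circ F = c_\ell - \Phi + \psi$. As $x \to a_{\ell+1}^-$ in $Y_\ell$ the right-hand side has a finite limit and $F(x) \to 1/2^-$, so $\psi(1/2^-)$ exists as a one-sided limit; symmetrically, letting $x \to a_{\ell+1}^+$ in $Y_{\ell+1}$ gives $F(x) \to 0^+$ and so $\psi(0^+)$ also exists. Subtracting the two one-sided limits of the cohomological equation at the common point $a_{\ell+1}$ cancels $\psi(a_{\ell+1})$ and yields
\[
c_{\ell+1} - c_\ell \;=\; \bigl[\Phi|_{Y_{\ell+1}}(a_{\ell+1}^+) - \Phi|_{Y_\ell}(a_{\ell+1}^-)\bigr] + \bigl[\psi(0^+) - \psi(1/2^-)\bigr].
\]
The bracketed $\Phi$-jump is the unique new summand $(1 - f^\ell(a_{\ell+1}))^a = (1/2)^a = 2^{-a}$, and the $\psi$-difference is manifestly independent of $\ell$, so $\{c_\ell\}$ is arithmetic in $\ell$.

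Arithmeticity together with the finite limit $c_\ell \to 1/(2^a - 1)$ forces the common difference to vanish, hence $c_\ell = 1/(2^a-1)$ for every $\ell \geq 2$. To close the argument, I will compare this value with the direct calculation $c_2 = \Phi(1/3) = (1 + 2^a)/3^a$: the equality $(1 + 2^a)/3^a = 1/(2^a - 1)$ rearranges to $4^a - 3^a = 1$, which holds only at $a = 1$, while strict convexity of $a \mapsto 4^a - 3^a$ on $[0,\infty)$ (its graph lies strictly below the secant through $(0,0)$ and $(1,1)$) gives $4^a - 3^a < 1$ on $(0,1)$, the contradiction. The main delicate step I foresee is justifying the one-sided limits $\psi(0^+)$ and $\psi(1/2^-)$, but these fall out of the cohomological identity itself rather than requiring any extra boundary regularity of $\psi$.
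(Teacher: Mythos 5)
Your argument takes a genuinely different route from the paper's. The paper contradicts the cohomological hypothesis by pairing three explicit periodic orbits ($0$; $\tfrac13\to\tfrac23$; $\tfrac17\to\tfrac27\to\tfrac47$) to get a single scalar identity $\chi(a)=1+(\tfrac23)^a+(\tfrac13)^a-(\tfrac67)^a-(\tfrac57)^a-(\tfrac37)^a=0$, and then shows $\chi(a)<0$ on $(0,1)$. You instead exploit all the branches at once: the fixed point in each $Y_\ell$ pins down $c_\ell=\Phi(x_\ell^*)$, the closed form $\Phi(x_\ell^*)=\frac{2^{\ell a}-1}{(2^\ell-1)^a(2^a-1)}\to\frac{1}{2^a-1}$ gives the asymptotics, and the boundary-matching across $a_{\ell+1}$ (a nice observation --- the one-sided limits $\psi(0^+)$, $\psi(\tfrac12^-)$ really do exist via the cohomological identity itself, since $F|_{Y_\ell}$ and $F|_{Y_{\ell+1}}$ are homeomorphisms onto $Y$) forces $\{c_\ell\}$ to be arithmetic, hence constant. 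All the explicit computations check out, and the structural idea of playing the asymptotics of the fixed-point values against the $\Phi$-jump across branches is attractive and more systematic than the paper's one-off cancellation of three orbits.

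There is, however, a gap, and it sits precisely where your route and the paper's diverge most. In Definition~\ref{def:cohomol} the function $\psi$ is $\bS^1$-valued, so the cohomological identity --- and therefore every consequence you draw from it --- holds only modulo $1$. Hence $c_\ell\equiv\Phi(x_\ell^*)\ (\mathrm{mod}\ 1)$, the arithmetic/constancy conclusion lives in $\bS^1$, and what you must finally rule out is
\[
\zeta(a):=\frac{1+2^a}{3^a}-\frac{1}{2^a-1}\in\bZ,
\]
not merely $\zeta(a)=0$. But $\zeta(a)=\frac{4^a-3^a-1}{3^a(2^a-1)}$ tends to $-\infty$ as $a\to0^+$ and to $0$ as $a\to1^-$, so by continuity it passes through every negative integer; for some $a\in(0,1)$ (numerically near $a\approx0.51$ one has $\zeta(a)=-1$) your final comparison produces no contradiction. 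Your convexity step $4^a-3^a<1$ excludes only $\zeta(a)=0$. The paper's choice of orbits sidesteps this: since $p^a\in(p,1)$ for $p,a\in(0,1)$, one reads off $\chi(a)\in(-1,1)$ a priori, so $\chi(a)\neq0$ suffices even modulo $1$. (The paper's written proof nominally takes $\widetilde{\Phi}:Y\to\bR$, but it is this a priori bound that makes the argument robust in $\bS^1$.) To repair your proof you would either need a combination of branches whose discrepancy is a priori confined to an interval of length $1$, or work at the level of derivatives $D\Phi$ rather than values of $\Phi$, which removes the mod-$1$ ambiguity entirely.
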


\begin{proof}
  Let \(x \in \overline{Y}_1\), \(x'\in \overline{Y}_2\) be defined as the points which satisfy \(F x =x\), \(F x' = x'\).
  Let \(y \in Y_1\) be such that \(y' = F y \in Y_2\) and \(F y' = F^2 y = y\).
  Explicitly,
  \begin{align*}
    x  & = 0, \quad f x = x,                                                                            \\
    x' & = \tfrac{1}{3},  \quad fx' = \tfrac{2}{3}, f^2 x' = x',                                        \\
    y  & = \tfrac{1}{7}, \quad  y' = f y = \tfrac{2}{7}, \quad  f^2 y = \tfrac{4}{7},  \quad f^3 y = y.
  \end{align*}
  Suppose, for the sake of contradiction, that \(\Phi : Y \to \bR\) is cohomologous to a locally constant function in the sense that there exists a \(\cC^1\) function \(\widetilde{\Phi}:Y \to \bR\) such that \(\Phi - \widetilde{\Phi} + \widetilde{\Phi} \circ F\) is constant on each set \(Y_\ell\) (and this extends to \(\overline{Y}_\ell\)).
  Considering the three periodic orbits introduced above, this implies that \( \Phi(x) + \Phi(x') = \Phi(y) + \Phi(y')\) and so,
  \begin{equation}
    \label{eq:fibre-sum}
    \varphi(x) +  \varphi(x')  +  \varphi(f x') = \varphi(y) + \varphi(y')+ \varphi(f y')
  \end{equation}
  Since \(\varphi(x) = {\dist{x}{1}}^{a}\) this implies that
  \[
    \varphi(0) +  \varphi(\tfrac{1}{3})  +  \varphi(\tfrac{2}{3}) - \varphi(\tfrac{1}{7}) - \varphi(\tfrac{2}{7}) - \varphi(\tfrac{4}{7})
    = \chi(a) = 0
  \]
  where, for convenience we defined
  \[
    \chi(a) = 1 + (\tfrac{2}{3})^a + (\tfrac{1}{3})^a - (\tfrac{6}{7})^a - (\tfrac{5}{7})^a - (\tfrac{3}{7})^a.
  \]
  An analysis of this function shows that \(\chi(0)=\chi(1)=0\) but that \(\chi(a)<0\) for all \(a \in (0,1)\).
  This completes the required contradiction \eqref{eq:fibre-sum}.
\end{proof}

\begin{remark}
  Contradicting the equality \eqref{eq:fibre-sum} obtained during the previous proof was the crucial step.
  The same argument could be performed choosing  \(x \in \overline{Y}_n\), \(x'\in \overline{Y}_m\) then the equation to contradict would become,
  \[
    \sum_{j=0}^{n - 1} \varphi(f^j x)- \varphi(f^j y) = \sum_{j=0}^{m - 1} \varphi(f^j y') - \varphi(f^j x').
  \]
  However this, or similar, can be contradicted by many different choices of assumptions on \(\varphi\).
  For example, if \(\varphi\) were constant on \(Y\) but monotone elsewhere and strictly increasing in a neighbourhood of \(1\) the required contradiction would also hold.
\end{remark}

\begin{proof}[Proof of Theorem~\ref{thm:example}]
  Lemma~\ref{lem:hyp-times} implies that, for any \(b>1\) there exists \(\delta>0\) such that the induced system has return times with are \((b,\frac{1}{2},\sigma)\)-hyperbolic times.
  Since \(\varphi(x) = {\dist{x}{1}}^{a}\) we know that \(\norm{D\varphi(x)Df(x)^{-1}} \leq \frac{1}{2} \dist{x}{S}^{-(1-a)}\).
  We may choose \(b\in (0, (1-a)^{-1})\) and so Lemma~\ref{lem:twist-control} applies and proves the required control on \(D\Phi\).
  This, together with the proof that \(\Phi\) is not cohomologous to a locally constant function, as shown in Lemma~\ref{lem:not-cohomol}, means that Theorem~\ref{thm:exp-two} applies in the setting and gives the proof of exponential mixing.
\end{proof}

\section*{Acknowledgements}

\begin{small}
  Massive thanks to Zeze Pacifico, this work would not have existed if it wasn't for them.
  Thanks to Roberto Castorrini, Stefano Galatolo and Carlangelo Liverani for several helpful discussions and comments.
  This work was partially supported by PRIN Grant ``Regular and stochastic behaviour in dynamical systems" (PRIN 2017S35EHN) and the MIUR Excellence Department Project MatMod@TOV awarded to the Department of Mathematics, University of Rome Tor Vergata.
\end{small}


\end{document}